\title[]{On global Okounkov bodies of spherical varieties}
\author{Guido Pezzini and Henrik Sepp\"anen} 
\thanks{The first author was supported by DFG grant BA 1559/6-1. The second author was partially supported by 
the DFG Priority Programme 1388 ``Representation Theory"}
\address{Guido Pezzini,
Dipartimento di Matematica ``Guido Castelnuovo'',
 ``Sapienza'' Universit\`a di Roma,
Piazzale A.\ Moro 5,
I-00185 Roma,
Italy}
\email{pezzini@mat.uniroma1.it}
\address{Henrik Sepp\"{a}nen,
Mathematisches Institut,
Georg-August-Univer\-sit\"at G\"ot\-tingen,
Bunsenstra\ss e 3-5, 
D-37073 G\"ottingen,
Germany}
\email{henrik.seppaenen@mathematik.uni-goettingen.de}
\newcommand{\C}{\mathbb{C}}
\newcommand{\R}{\mathbb{R}}
\newcommand{\N}{\mathbb{N}}
\newcommand{\Q}{\mathbb{Q}}
\renewcommand{\phi}{\varphi}
\renewcommand{\tilde}{\widetilde}
\newcommand{\ord}{\mbox{ord}}
\renewcommand{\to}{\longrightarrow}
\renewcommand{\O}{\mathcal O}
\newtheorem{prop}{Proposition}[section]
\newtheorem{lemma}[prop]{Lemma}
\newtheorem{cor}[prop]{Corollary}
\newtheorem{thm}[prop]{Theorem}
\theoremstyle{definition}
\newtheorem{defin}[prop]{Definition}
\newtheorem{remark}[prop]{Remark}
\begin{document}

\begin{abstract}
We define and study the {\em global Okounkov moment cone} of a projective spherical variety $X$, generalizing both the global Okounkov 
body and the {\em moment body} of $X$ defined by Kaveh and Khovanskii. Under mild assumptions on $X$ we show that the global Okounkov moment 
cone of $X$ is rational polyhedral. As a consequence, also the global Okounkov body of $X$, with respect to a particular valuation, is 
rational polyhedral.
\end{abstract}
 
\maketitle

\section*{Introduction}

Started by the work of A.\ Okounkov, cf.\ \cite{Ok96}, the theory of Okounkov bodies has been widely developed in recent years, in connection 
with other research topics such as representation theory, asymptotics of linear series and positivity, and tropical geometry, 
see e.g.\ \cite{LM09}, \cite{KK09}, \cite{KL15}, \cite{Se16}, \cite{SS17}, \cite{PU16}.

In this short note, we study a convex cone associated to a projective variety $X$, spherical under the action of a semisimple and simply 
connected group $G$. The convex cone, called {\em global Okounkov moment cone} and denoted by%
\footnote{The meaning of $\nu$ in this notation is recalled below.} $\widetilde \Delta_\nu(X)$, is closely related to the {\em global Okounkov body}, 
which we denote here by $\Delta_\nu(X)$. The latter is a ``global version'' of the Okounkov bodies associated to pseudo effective divisors of projective varieties. 

We recall that the construction of $\Delta_\nu(X)$ depends on the choice of a function
\[
\nu: \bigsqcup_{D \in \text{Pic}(X)} H^0(X, \mathcal{O}_X(D)) \setminus \{0\} \rightarrow \N_0^n,
\]
where $n=\dim X$, called a {\em valuation-like} function.

In the case of a general smooth projective variety $X$, such function is often constructed using a flag of 
subvarieties $X=Y_0 \supset Y_1 \supset \ldots \supset Y_n = \{\text{pt}\}$, where $Y_i$ is a smooth prime divisor of $Y_{i-1}$ for all $i\in \{1,\ldots,n\}$. For $D$ a 
Cartier divisor and $s\in H^0(X, \mathcal{O}_X(D))$, one defines $\nu(s)=(\nu_1(s),\ldots,\nu_n(s))$ setting first $\nu_1(s)=\ord_{Y_1}(s)$. Then, by restriction 
on $Y_1$, the section $s$ determines naturally a section $s_1\in H^0(Y_1,\mathcal O_{Y_1}(D-\nu_1(s)Y_1))$, and we set $\nu_2(s) = \ord_{Y_2}(s_2)$. Iterating this 
construction defines $\nu_i(s)$ for all $i\in\{1,\ldots,n\}$. Other definitions of valuation-like functions are possible, and sometimes more convenient. This is the 
case e.g.\ for flag varieties, where Schubert varieties are natural candidates for the elements of the flag, but the possible failure of their smoothness leads 
to alternative constructions. One such, which uses Bott-Samelson varieties, is described in \cite{SS17}.

Once the function $\nu$ is given, the global Okounkov body is defined as the closed convex cone in $\R^n\times N^1(X)$ generated by all pairs 
of the form $(\nu(s),[D])\in \R^n\times N^1(X)_\R$ where $D$ is effective and $s$ is as above.

In the setting of spherical varieties it is convenient to use valuation-like functions defined using a mixture of the two constructions 
mentioned above, see Section~\ref{s:globalOkounkovmomentcone} below for details.

In addition, given the importance of the structure of $G$-module of the space of sections $H^0(X, \mathcal{O}_X(D))$, it is natural to add to the points of the 
global Okounkov body a third component. We assume that $s$ belongs to a simple submodule $V$ of $H^0(X, \mathcal{O}_X(D))$ (we show in Section~\ref{s:globalOkounkovmomentcone} 
that this assumption is harmless), and, instead of the pair $(\nu(s),[D])$, we consider the triple $(\nu(s),[D], \lambda)$ where $\lambda$ is the highest weight of $V$. The 
closed convex cone generated by such triples is the global Okounkov moment cone $\widetilde \Delta_\nu(X)$, see Definition~\ref{def:gOmc} below. This construction 
can also be considered a generalization of the {\em moment body} of $X$ defined by Kaveh-Khovanskii 
in \cite{KK12}.

Our main result is Theorem~\ref{T:polyhedral} below, where we show under mild hypotheses on $X$ that $\widetilde \Delta_\nu(X)$ is rational polyhedral. This also 
implies that $\Delta_\nu(X)$ is rational polyhedral, answering in our setting a question posed in \cite{LM09}.

We end this introduction by remarking that our varieties are Mori dream spaces (see \cite[Theorem~4.1.1]{perrin}), as defined by Hu and Keel (\cite{hk}). As a consequence, they also admit valuations defining rational 
polyhedral Okounkov bodies by the method of Postinghel and Urbinati (\cite{PU16}) that uses tropical compactifications. It is not clear to us, however, whether our 
valuation can be related to theirs.

\section{Setting}\label{s:setting}
Let $G$ be a simply connected semisimple complex algebraic group, let $B \subseteq G$ be a Borel subgroup and $T\subseteq B$ a maximal torus. We denote 
by $W$ the Weyl group of $G$. The Lie algebras of $G,B$, and $T$ will be denoted by the corresponding lower case German letter. Given a 
dominant weight $\lambda$, we denote by $V(\lambda)$ the irreducible $G$-module of highest weight $\lambda$.

We recall that an irreducible normal $G$-variety is {\em spherical} if $B$ has an open orbit on $X$.

Throughout the paper $X$ will be a projective $\Q$-factorial spherical $G$-variety of dimension $n$. We shall further assume that $X$ admits $G$-invariant Cartier prime divisors 
$E_1,\ldots, E_r$ with $E_i=Z(s_i)$, for a $G$-invariant section $s_i \in H^0(X, \mathcal{O}_X(E_i))$, with combinatorial normal crossings such that 
the intersection $E_1 \cap \cdots \cap E_r$ is a closed $G$-orbit $Gx_0$. Then $Gx_0 \cong G/P$ for a parabolic subgroup $P$ containing $B$. 
We assume that all intersections of the $E_i$ are smooth along the orbit $Gx_0$.

An example where these assumptions are satisfied is the case where $X$ is a {\em wonderful variety}, see e.g.\ \cite{Pe17}.

We also consider a $B$-equivariant birational morphism 
\begin{align}
\tau: Z \to G/P, \label{E: bsres}
\end{align}
where $Z$ is a Bott-Samelson variety defined by a reduced sequence 
$w=(s_{i_1}, \ldots, s_{i_m})$, where $s_{i_j} \in W$ are simple reflections (cf.\ e.g.\ \cite[Section 13.10]{jantz}).

We recall that any line bundle on $X$ has a unique $G$-linearization, by \cite{knop-kraft-luna-vust}.

\section{The global Okounkov moment cone}\label{s:globalOkounkovmomentcone}

We define a partial flag of subvarieties 
\begin{align*}
 X:=Y_0 \supset Y_1 \supset \cdots \supset Y_r=Gx_0,
\end{align*}
of $X$ by 
\begin{align*}
 Y_i:=\bigcap_{j=1}^i E_i, 
\end{align*}
i.e., $Y_i=Z(s_1,\ldots, s_i)$, for $i=1,\ldots, r$.

We now define a valuation-like function with values in $\N_0^n$ on the disjoint union 
$\bigsqcup_{D \in \text{Pic}(X)} H^0(X, \mathcal{O}_X(D)) \setminus \{0\}$ 
of all section spaces of all line bundles
by first using the $\N_0^r$-valued function defined by the partial flag above, and then continuing with the valuation-like function 
defined by the ``vertical flag'' on a Bott-Samelson variety (cf. \cite{SS17}). 

\noindent We thus first define 
\begin{align*}
 \nu': \bigsqcup_{D \in \text{Pic}(X)} H^0(X, \mathcal{O}_X(D)) \setminus \{0\} \rightarrow \N_0^r
\end{align*}
by $\nu'(s):=(a_1,\ldots, a_r)$, for a Cartier divisor $D$ and nonzero section $s \in H^0(X, \mathcal{O}_X(D))$, where 
\begin{align}
 a_1&:=\mbox{ord}_{Y_1}(s), \nonumber\\
 a_2&:=\mbox{ord}_{Y_2}\left(\frac{s}{s_1^{a_1}}\mid_{Y_1}\right),\nonumber \\
 &\vdots \nonumber \\
 a_r&:=\mbox{ord}_{Y_r}\left(\frac{s}{s_1^{a_1} \cdots s_{r-1}^{a_{r-1}}}\mid_{Y_{r-1}}\right). \label{E: partialval}
\end{align}
Recall the valuation-like function 
\begin{align*}
 \tilde{\nu''}: \bigsqcup_{D \in \text{Pic}(Z)} H^0(Z, \mathcal{O}_Z(D)) \setminus \{0\} \rightarrow \N_0^m
\end{align*}
defined by the ``vertical flag'' of Bott-Samelson-subvarieties of $Z$ with respect to the reduced expression $w=(s_{i_1},\ldots, s_{i_m})$.
This is a flag of Bott-Samelson varieties $Z_j$, where $Z_j$ is defined by the reduced subsequence $(s_{i_{j+1}},\ldots, s_{i_m})$ of $w$ (cf. \cite{SS17}).
Using the birational map $\tau: Z \to G/P$ (eq. \eqref{E: bsres}), we then define
\begin{align*}
\nu'':& \bigsqcup_{D \in \text{Pic}(G/P)} H^0(G/P, \mathcal{O}_{G/P}(D)) \setminus \{0\} \rightarrow \N_0^m,\\
\quad \nu''(s)&:=\tilde{\nu''}(\tau^*s), \quad s \in H^0(G/P, \mathcal{O}_{G/P}(D)).
\end{align*}
By putting $(a_{r+1}, \ldots, a_n):=\nu''(\frac{s}{s_1^{a_1} \cdots s_r^{a_r}}\mid_{Y_r})$ (cf. \eqref{E: partialval}), and defining 
$\nu(s):=(a_1,\ldots, a_n)$, we obtain a valuation-like function 
\begin{align}
 \nu: \bigsqcup_{D \in \text{Pic}(X)} H^0(X, \mathcal{O}_X(D)) \setminus \{0\} \rightarrow \N_0^n.
\end{align}
We write $\nu(s)=(\nu_1(s),\ldots, \nu_n(s))$, for a section $s$ of some line bundle.

One defines similarly a valuation-like function $\nu_{Y_i}$ for sections of line bundles on $Y_i$, for any variety $Y_i$ belonging to the flag $Y_\bullet$.

\begin{remark}
We recall that, since $X$ is spherical, for any line bundle $\mathcal L$ on $X$ the space of sections $H^0(X,\mathcal L)$ is a 
multiplicity-free $G$-module, that is, any simple 
submodule appears with multiplicity $1$. This fact is used in the proof of the following proposition.
\end{remark}

\begin{prop} \label{P: sepvalues}
Let $D$ be an effective divisor on $X$, and consider the decomposition 
 \begin{align*}
  H^0(X, \O_X(D))=\bigoplus_{\lambda} V(\lambda)
 \end{align*}
of the section space $H^0(X, \O_X(D))$ into a direct sum of irreducible $G$-modules.\\

\noindent (i) For any $V(\lambda) \subseteq H^0(X, \O_X(D))$, and any two nonzero sections $s, t \in V(\lambda)$, we have
$\nu'(s)=\nu'(t)$.\\

\noindent (ii) 
If $s \in V(\lambda), t \in V(\mu)$ are nonzero sections for 
distinct highest weights $\lambda$ and $\mu$, then 
$\nu'(s) \neq \nu'(t)$. In particular, $\nu(s) \neq \nu(t)$.
\end{prop}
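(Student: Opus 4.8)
The plan is to prove (i) by a filtration argument exploiting the $G$-invariance of $\nu'$, and (ii) by following highest weight vectors through the flag $Y_\bullet$ down to the closed orbit $Gx_0 \cong G/P$, where the Borel--Weil theorem forces the two weights to coincide. For (i), I would first record that $\nu'$ is constant on $G$-orbits of sections: each $E_i$, hence each $Y_i$, is $G$-invariant and each $s_i$ is $G$-invariant, so every step in \eqref{E: partialval} is $G$-equivariant, and since $\ord_{Y_i}(g\cdot u)=\ord_{g^{-1}Y_i}(u)=\ord_{Y_i}(u)$ for any section $u$, one gets $\nu'(g\cdot s)=\nu'(s)$ for all $g\in G$. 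Since $\nu'$ satisfies the valuation inequality $\nu'(s+t)\geq_{\mathrm{lex}}\min(\nu'(s),\nu'(t))$ and $\nu'(cs)=\nu'(s)$ for scalars $c\neq 0$, for every $v\in\N_0^r$ the set $F_v:=\{s:\nu'(s)\geq_{\mathrm{lex}}v\}\cup\{0\}$ is a linear subspace, which is moreover $G$-stable by the invariance just noted; the same holds for $F_{v^+}:=\{s:\nu'(s)>_{\mathrm{lex}}v\}\cup\{0\}$. Fixing $V(\lambda)\subseteq H^0(X,\mathcal{O}_X(D))$ and letting $v$ be the lexicographic minimum of $\nu'$ on $V(\lambda)\setminus\{0\}$ (which exists since $\leq_{\mathrm{lex}}$ well-orders $\N_0^r$), I would note that $V(\lambda)\subseteq F_v$ while $V(\lambda)\cap F_{v^+}$ is a proper $G$-submodule of $V(\lambda)$, hence $0$ by irreducibility. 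Thus $\nu'\equiv v$ on $V(\lambda)\setminus\{0\}$, proving (i).

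For (ii), by (i) it suffices to compare $\nu'$ on highest weight vectors, so I would take nonzero $B$-eigenvectors $s_\lambda\in V(\lambda)$ and $s_\mu\in V(\mu)$ of weights $\lambda$ and $\mu$, and assume for contradiction that $\nu'(s_\lambda)=\nu'(s_\mu)=(a_1,\ldots,a_r)$. I would then run the defining procedure \eqref{E: partialval} in parallel for the two sections. Because each $s_i$ is $G$-invariant (of weight $0$) and each $Y_i$ is $G$-stable, every peeling-and-restriction step $u\mapsto \big(u/(s_1^{a_1}\cdots s_{i-1}^{a_{i-1}})\big)\mid_{Y_{i-1}}$ is $G$-equivariant, hence preserves the $B$-weight; moreover, since at the $i$-th step one divides by exactly the order of vanishing $a_i$, the resulting section restricts to a \emph{nonzero} section on $Y_i$. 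As the exponents $a_i$ agree for $s_\lambda$ and $s_\mu$ by assumption, the procedure produces nonzero $B$-eigensections $\bar s_\lambda,\bar s_\mu$ of weights $\lambda$ and $\mu$ lying in one and the same space $H^0\big(Y_r,\mathcal{O}_{Y_r}((D-\sum_{i=1}^r a_iE_i)\mid_{Y_r})\big)$.

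Finally, since $Y_r=Gx_0\cong G/P$ is a flag variety, I would invoke the Borel--Weil theorem: the $G$-module $H^0(G/P,\mathcal{L}')$, being nonzero, is irreducible and therefore contains a unique $B$-eigenline. Hence $\bar s_\lambda$ and $\bar s_\mu$ are proportional and carry the same $B$-weight, forcing $\lambda=\mu$, a contradiction. Thus $\nu'(s_\lambda)\neq\nu'(s_\mu)$, and by (i) $\nu'(s)\neq\nu'(t)$ for all nonzero $s\in V(\lambda)$, $t\in V(\mu)$; since $\nu'$ consists of the first $r$ coordinates of $\nu$, also $\nu(s)\neq\nu(t)$. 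The main obstacle will be the bookkeeping in the middle step: one must verify that the $B$-weights are preserved along the flag, that both sections stay nonzero after each restriction, and that they land in the \emph{same} line bundle on $G/P$, so that the rigidity supplied by Borel--Weil on the closed orbit can be applied to conclude $\lambda=\mu$.
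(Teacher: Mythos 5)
Your proposal is correct, and it splits naturally into a part that matches the paper and a part that genuinely differs. For (ii) you follow the paper's argument essentially step for step: reduce via (i) to $B$-semiinvariant sections, peel off $s_1^{a_1}\cdots s_r^{a_r}$ (weight-preserving because the $s_i$ are $G$-invariant, and landing in the same line bundle on $Y_r$ because the exponents $a_i$ agree by hypothesis), restrict to the closed orbit $Y_r\cong G/P$, and invoke the irreducibility of $H^0(G/P,\mathcal{L}')$ via Borel--Weil to force $\lambda=\mu$; your write-up even states the final contradiction correctly where the paper's text contains a slip (``a contradiction unless $\lambda\neq\mu$'' should read ``unless $\lambda=\mu$''). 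For (i) you diverge: the paper uses the cyclicity of any nonzero vector in the irreducible module, writing $t=\sum_i c_i g_i s$ and applying the $G$-invariance of $\nu'$ together with the non-archimedean inequality twice (once in each direction) to conclude $\nu'(s)=\nu'(t)$, whereas you package the same two facts into the $G$-stable level sets $F_v$ and $F_{v^+}$, take the lexicographic minimum $v$ of $\nu'$ on $V(\lambda)\setminus\{0\}$ (legitimate, since $\leq_{\mathrm{lex}}$ well-orders $\N_0^r$), and kill $V(\lambda)\cap F_{v^+}$ by irreducibility. The two mechanisms are equivalent in strength, but they buy slightly different things: the paper's cyclic-vector computation is shorter and more hands-on, while your filtration argument avoids exhibiting group translates altogether and shows more generally that any $G$-invariant valuation-like function is constant on each irreducible submodule, which is the cleaner statement to reuse. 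One point you and the paper treat at the same level of detail is the regularity of the successive quotients $s/(s_1^{a_1}\cdots s_i^{a_i})$ along the flag; your explicit observation that each step produces a nonzero restriction precisely because one divides by the exact order of vanishing is the right justification, so no gap results.
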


\begin{proof}
For the first part, note that the $G$-invariance of the $Y_i$ implies that $\nu'$ is $G$-invariant, i.e., $\nu'(gs)=\nu'(s)$, for any 
nonzero $s \in H^0(X, \O_X(D))$, and $g \in G$. Moreover, $\nu'(s+t) \geq \mbox{min}\{\nu'(s), \nu'(t)\}$, for any 
nonzero $s, t \in H^0(X, \O_X(D))$ such that $s+t \neq 0$. 

Now, if $s \in V(\lambda) \in H^0(X, \O_X(D))$ is a nonzero section, it is a cyclic vector for the $G$-module. Hence, 
if $t \in V(\lambda) \subseteq H^0(X, \O_X(D))$, we can write $t$ as 
\begin{align*}
 t=\sum_{i=1}^m c_ig_is,
\end{align*}
for some $g_1,\ldots, g_m \in G$, and $c_1,\ldots, c_m \in \C$. The $G$-invariance and the valuation-like properties of $\nu'$
then yield 
\begin{align*}
\nu(t) \geq \mbox{min}\{\nu(g_is)\mid i=1,\ldots, m\}=\nu(s).
\end{align*}
Reversing the roles of $s$ and $t$, that is, using $t$ as a $G$-cyclic vector, yields $\nu(s) \geq \nu(t)$. This proves (i).\\

For (ii), assume that $\nu'(s)=\nu'(t)=(a_1,\ldots, a_r)$, for nonzero sections $s \in V(\lambda), t \in V(\mu)$, of $\O_X(D)$. 
By (i), we may without loss of generality assume that 
$s$ and $t$ are $B$-semiinvariants of weight $\lambda$ and $\mu$, respectively.
 
 The definition of $\nu'$, and the fact that $Y_i$ is the common zero set of the $G$-invariant sections $s_1,\ldots, s_i$, with $s_i \in H^0(X, \O_X(E_i))$, shows that 
 the restriction $\frac{s}{s_1^{a_1} \cdot \cdots \cdot s_r^{a_r}}\mid_{G/P}$ defines a regular section of the line bundle $\O_{G/P}(D-(a_1E_1+\cdots+a_rE_r))$ over $G/P$. 
 Moreover, since $s$ is $B$-semiinvariant of weight $\lambda$, and the $s_i$ are $G$-invariant, the restricted section $\frac{s}{s_1^{a_1} \cdot \cdots \cdot s_r^{a_r}}\mid_{G/P}$ is 
 also $B$-semiinvariant of weight $\lambda$. The same argument, applied to $t$, shows that 
 $\frac{t}{s_1^{a_1} \cdot \cdots \cdot s_r^{a_r}}\mid_{G/P}$ is a $B$-semiinvariant section of $\O_{G/P}(D-(a_1E_1+\cdots+a_rE_r))$ of weight $\mu$. 
 Since $H^0(G/P, \O_{G/P}(D-(a_1E_1+\cdots+a_rE_r)))$ defines an irreducible $G$-module, this is a contradiction unless $\lambda \neq \mu$. 
 \end{proof}

\begin{defin}
 Let 
 \begin{align*}
  \widetilde S_\nu(X):=\{(\nu(s), [D], \lambda) \in \N_0^n \times N^1(X) \times \Gamma \;\mid\; & s \in V(\lambda)\setminus \{0\}, \\
  & V(\lambda) \subseteq H^0(X, \O_X(D)) \}.
 \end{align*}

\end{defin}

\begin{prop}
 The set $\widetilde S_\nu(X)$ is an additive subsemigroup of $\N_0^n \times N^1(X) \times \Gamma$.
\end{prop}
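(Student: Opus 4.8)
The plan is to show that $\widetilde S_\nu(X)$ is closed under addition by taking two generating triples and producing a third triple of the required form whose components are the respective sums. Concretely, I would start with $(\nu(s),[D],\lambda)$ and $(\nu(t),[D'],\mu)$ in $\widetilde S_\nu(X)$, where $s \in V(\lambda) \subseteq H^0(X,\O_X(D))$ and $t \in V(\mu) \subseteq H^0(X,\O_X(D'))$ are nonzero. The natural candidate for the sum is the triple attached to the product section $st \in H^0(X,\O_X(D+D'))$, since the middle components add via $[D]+[D']=[D+D']$ in $N^1(X)$ and the weight components add via $\lambda+\mu$. The two facts I need are that $st$ lies in a simple submodule of highest weight $\lambda+\mu$, and that $\nu(st)=\nu(s)+\nu(t)$.

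For the weight component, I would use that $H^0(X,\O_X(D+D'))$ is multiplicity-free (the Remark preceding Proposition~\ref{P: sepvalues}), together with the standard Cartan multiplication fact: by Proposition~\ref{P: sepvalues}(i) I may assume $s$ and $t$ are $B$-semiinvariants (highest weight vectors) of weights $\lambda$ and $\mu$. Then $st$ is again a $B$-semiinvariant of weight $\lambda+\mu$, hence a highest weight vector generating a simple submodule $V(\lambda+\mu) \subseteq H^0(X,\O_X(D+D'))$. This places the triple $(\nu(st),[D+D'],\lambda+\mu)$ in $\widetilde S_\nu(X)$ with the correct second and third coordinates, so it only remains to identify the first coordinate.

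The main work is therefore the additivity $\nu(st)=\nu(s)+\nu(t)$. I would verify this componentwise along the construction of $\nu$. The $\N_0^r$-valued part $\nu'$ is built from orders of vanishing along the prime divisors $Y_i = Z(s_i)$ after successive division by the $G$-invariant $s_i$; since $\ord_{Y_i}$ is a valuation it is additive on products, and dividing out $s_1^{a_1}\cdots s_{i-1}^{a_{i-1}}$ before restricting commutes with taking products, so one checks inductively that $\nu'(st)=\nu'(s)+\nu'(t)$. For the remaining coordinates, after dividing by $s_1^{a_1}\cdots s_r^{a_r}$ and restricting to $Y_r=Gx_0\cong G/P$, one pulls back along $\tau$ to $Z$ and applies $\tilde{\nu''}$; this valuation-like function on the Bott-Samelson variety is likewise additive on products of sections (as established in \cite{SS17}), and pullback $\tau^*$ is multiplicative, giving additivity of the last $n-r$ coordinates. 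Assembling the two parts yields $\nu(st)=\nu(s)+\nu(t)$.

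The subtle point, and the step I expect to require the most care, is making the successive divisions and restrictions rigorous for the \emph{product} section: one must check that the fraction $\tfrac{st}{s_1^{\,a_1+b_1}\cdots s_r^{\,a_r+b_r}}\mid_{Y_r}$ actually equals the product of the two individually restricted fractions, where $\nu'(s)=(a_1,\dots,a_r)$ and $\nu'(t)=(b_1,\dots,b_r)$. This follows because the $s_i$ are regular sections with $Y_i=Z(s_i)$ and the restriction maps are ring homomorphisms on the relevant sheaves of sections, so that division and restriction are compatible with multiplication on the locus where the relevant denominators are defined. Once this compatibility is in place, additivity of each valuation-like building block propagates through the whole chain, and we conclude that $(\nu(s),[D],\lambda)+(\nu(t),[D'],\mu)=(\nu(st),[D+D'],\lambda+\mu)\in\widetilde S_\nu(X)$, proving that $\widetilde S_\nu(X)$ is an additive subsemigroup.
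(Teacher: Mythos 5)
Your reduction ``by Proposition~\ref{P: sepvalues}(i) I may assume $s$ and $t$ are $B$-semiinvariants'' is where the argument breaks. Part (i) of that proposition only asserts that $\nu'$ --- the first $r$ coordinates of $\nu$ --- is constant on $V(\lambda)\setminus\{0\}$; the full $n$-tuple $\nu(s)$ genuinely depends on which section of $V(\lambda)$ you take (the remaining $n-r$ coordinates come from the Bott--Samelson valuation on $G/P$, which separates the sections within a single simple module --- this is precisely what makes the fibers of the Okounkov body have the right dimension). Since the triple you must add records the actual value $\nu(s)$, replacing $s$ by the highest weight vector $v_\lambda$ changes the first coordinate, so your argument only proves closure under addition for the special triples coming from highest weight sections. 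For arbitrary $s \in V(\lambda)$ and $t \in V(\mu)$ the product $st$ is not a $B$-eigenvector and does not lie in any single simple submodule, so $(\nu(st),[D+D'],\lambda+\mu)$ is not visibly an element of $\widetilde S_\nu(X)$: membership requires exhibiting a nonzero section \emph{inside} $V(\lambda+\mu)\subseteq H^0(X,\O_X(D+D'))$ whose value equals $\nu(s)+\nu(t)$.

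The missing step --- and the heart of the paper's proof --- is to decompose $st=\sum_\xi r_\xi$ into its simple components $r_\xi \in V(\xi)\subseteq H^0(X,\O_X(D+D'))$ and to identify which component realizes the value. By Proposition~\ref{P: sepvalues}(ii) the values $\nu'(r_\xi)$, and hence the $\nu(r_\xi)$, are pairwise distinct, so the valuation-like minimum property gives $\nu(st)=\nu(r_{\xi_0})$ for a unique weight $\xi_0$; computing $\nu'(st)=\nu'(s)+\nu'(t)=\nu'(v_\lambda)+\nu'(v_\mu)=\nu'(v_\lambda v_\mu)=\nu'(v_{\lambda+\mu})$ --- the place where part (i) \emph{is} legitimately applicable, since only $\nu'$ is involved --- then forces $\xi_0=\lambda+\mu$. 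The witness section is $r_{\lambda+\mu}$, with $\nu(r_{\lambda+\mu})=\nu(st)=\nu(s)+\nu(t)$. The pieces you did verify, namely additivity of $\nu$ on products via the additivity of each order of vanishing and the compatibility of division and restriction with multiplication, are correct and are indeed used (largely implicitly) in the paper's proof; they simply do not suffice without the component-decomposition argument.
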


\begin{proof}
 Let $s \in V(\lambda) \subseteq H^0(X, \O_X(D))$ and $t \in V(\mu) \subseteq H^0(X, \O_X(E))$ be nonzero sections. We show that $\nu(s)+\nu(t)$ is the 
 value of a section in the $G$-submodule $V(\lambda+\mu) \subseteq H^0(X, \O_X(D+E))$. 
 
 Since $\nu(s)=\nu(s+cv_\lambda)$ for all $c \in \C$, where $v_\lambda \in V(\lambda)$ is a $B$-semiinvariant section, we may assume that the decomposition
 of $s$ as a sum of weight vectors has a nonzero component of weight $\lambda$. Similarly, we may assume that $t$ has a nonzero weight component of weight $\mu$. 
 The product section $s\cdot t \in H^0(X, \O_X(D+E))$ decomposes as a sum
 \begin{align}
  s \cdot t=\sum_\xi r_\xi, \quad \xi \in V(\xi) \subseteq H^0(X, \O_X(D+E)). 
  \label{E: decomprod}
 \end{align}
Since $\nu(r_\xi) \neq \nu(r_\xi')$ for $\xi \neq \xi'$, by Proposition \ref{P: sepvalues}, we have that 
\begin{align*}
 \nu(s \cdot t)=\mbox{min} \{\nu(r_\xi)\}, 
\end{align*}
where the minimum is taken over the set of highest weights occurring in the decomposition \eqref{E: decomprod}.  
In fact, this minimum is attained by the highest weight $\xi_0$ for which $\nu'(s \cdot t)=\nu'(r_{\xi_0})$. Since $\nu'(s \cdot t)=\nu'(s)+\nu'(t)=\nu'(v_\lambda)+\nu'(v_\mu)=\nu'(v_\lambda \cdot v_\mu)
=\nu'(v_{\lambda+\mu})$, it follows that $\xi_0=\lambda+\mu$. 
Hence, 
\begin{align*}
 (\nu(s), [D], \lambda)+(\nu(t), [E], \mu)=(\nu(r_{\lambda+\mu}), [D]+[E], \lambda+\mu) \in \widetilde S_\nu(X), 
\end{align*}
where $r_{\lambda+\mu}$ is the component in $V(\lambda+\mu)$ of the product $$s \cdot t \in H^0(X, \O_X(D+E)).$$ 
This proves that $\widetilde S_\nu(X)$ is a semigroup.
\end{proof}

\begin{defin}
The {\em global Okounkov semigroup} of $X$ (with respect to $\nu$) is
\begin{align*}
 S_\nu(X):=\{(\nu(s), [D]) \in \N_0^n \times N^1(X) \mid s \in H^0(X, \O_X(D)) \setminus \{0\}\}.
 \end{align*}
The {\em global Okounkov body} of $X$ (with respect to $\nu$) is the closed convex cone $\Delta_\nu(X)=\overline{\mbox{cone}}(S_\nu(X))$ generated by $S_\nu(X)$.
\end{defin}

\begin{prop}
The map $p\colon \widetilde S_\nu(X) \to S_\nu(X)$ given by $(\nu(s), [D], \lambda) \mapsto (\nu(s), [D])$ defines an isomorphism of semigroups. 
\end{prop}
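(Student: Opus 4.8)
The plan is to show that $p$ is a bijective semigroup homomorphism. Since $p$ is the restriction to $\widetilde S_\nu(X)$ of the linear projection $\N_0^n \times N^1(X) \times \Gamma \to \N_0^n \times N^1(X)$ forgetting the last coordinate, and the semigroup operations on both sides are the restrictions of the ambient coordinatewise additions, $p$ is automatically a homomorphism. Consequently its image is a subsemigroup of $\N_0^n\times N^1(X)$, and once I have shown that $p$ is a bijection onto $S_\nu(X)$ the inverse map is automatically a homomorphism as well, so $p$ is an isomorphism. It therefore remains only to prove that $p$ maps $\widetilde S_\nu(X)$ onto $S_\nu(X)$ and that it is injective.

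For surjectivity, I would begin with an arbitrary element $(\nu(s),[D])\in S_\nu(X)$, where $0\neq s\in H^0(X,\O_X(D))$, and decompose $s=\sum_\lambda s_\lambda$ according to the multiplicity-free isotypic decomposition $H^0(X,\O_X(D))=\bigoplus_\lambda V(\lambda)$, so that $s_\lambda\in V(\lambda)$. By Proposition~\ref{P: sepvalues}(ii) the values $\nu(s_\lambda)$ attached to the weights $\lambda$ occurring in $s$ are pairwise distinct; the valuation-like property of $\nu$ then forces $\nu(s)=\min_\lambda \nu(s_\lambda)$, the minimum being attained at a single weight $\lambda_0$, exactly as in the proof that $\widetilde S_\nu(X)$ is a semigroup. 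Since $s_{\lambda_0}\neq 0$ lies in $V(\lambda_0)\subseteq H^0(X,\O_X(D))$, the triple $(\nu(s_{\lambda_0}),[D],\lambda_0)=(\nu(s),[D],\lambda_0)$ belongs to $\widetilde S_\nu(X)$ and is sent by $p$ to $(\nu(s),[D])$, giving surjectivity.

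For injectivity, suppose two triples $(\nu(s),[D],\lambda)$ and $(\nu(t),[E],\mu)$ of $\widetilde S_\nu(X)$ have the same image, i.e.\ $\nu(s)=\nu(t)$ and $[D]=[E]$ in $N^1(X)$. The idea is to reduce to a single section space. Since $X$ is a $\Q$-factorial spherical variety, it is a Mori dream space (as recalled in the introduction), so $\text{Pic}(X)_\Q=N^1(X)_\Q$; together with the torsion-freeness of $\text{Pic}(X)$ for spherical $X$, this makes the natural map $\text{Pic}(X)\to N^1(X)$ injective. Hence $[D]=[E]$ yields $\O_X(D)\cong\O_X(E)$, and using the uniqueness of the $G$-linearization of line bundles (recalled in Section~\ref{s:setting}) this isomorphism is $G$-equivariant, so $s$ and $t$ may be regarded as nonzero sections of one and the same line bundle, lying in the submodules $V(\lambda)$ and $V(\mu)$. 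If $\lambda\neq\mu$, Proposition~\ref{P: sepvalues}(ii) gives $\nu'(s)\neq\nu'(t)$, whence $\nu(s)\neq\nu(t)$, contradicting our hypothesis; therefore $\lambda=\mu$ and the two triples coincide.

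The step I expect to be the main obstacle is precisely the passage from equality of numerical classes $[D]=[E]$ to an actual isomorphism $\O_X(D)\cong\O_X(E)$, since this is what makes Proposition~\ref{P: sepvalues}(ii) applicable; it relies on the injectivity of $\text{Pic}(X)\to N^1(X)$ for our class of varieties. Everything else is formal once the separation of weights is in hand: the homomorphism property is built into the projection, and surjectivity is the standard ``value of a sum equals the minimum of distinct values'' argument.
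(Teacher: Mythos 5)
Your proof is correct and follows essentially the same route as the paper: surjectivity via the isotypic decomposition of $s$ and the pairwise-distinct values from Proposition~\ref{P: sepvalues}, and injectivity by reducing to a single line bundle and applying Proposition~\ref{P: sepvalues}(ii). You additionally justify the step $[D]=[E]\Rightarrow \O_X(D)\cong\O_X(E)$ (via freeness of $\Pic(X)$ and the Mori dream space property) together with the $G$-equivariance of this identification, which the paper's proof asserts without comment.
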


\begin{proof}
 Clearly, $p$ is a homomorphism of semigroups. For the surjectivity, let $(\nu(s), [D]) \in S_\nu(X)$, so that $s \in H^0(X, \O_X(D))$. The section $s$ then 
 decomposes as a sum $s=\sum_\xi r_\xi$, where $r_\xi \in V(\xi) \subseteq H^0(X, \O_X(D))$. Since, by Proposition \ref{P: sepvalues}, the values of the $r_\xi$ are distinct, we 
 have $\nu(s)=\nu(r_\lambda)$, for some $\lambda \in \Gamma$ with $V(\lambda) \subseteq H^0(X, \O_X(D))$. Then, $(\nu(r_\lambda), [D], \lambda) \in \widetilde S_\nu(X)$, and 
 \begin{align*}
  p(\nu(r_\lambda), [D], \lambda)=(\nu(r_\lambda), [D])=(\nu(s), [D]).
 \end{align*}
 
 For the injectivity, let $(\nu(s), [D], \lambda)$ and $(\nu(s'), [D'], \lambda')$ be in $\widetilde S_\nu(X)$ with $\nu(s)=\nu(s')$ and $[D]=[D']$. Then $\O_X(D)\cong\O_X(D')$, and by 
 Proposition \ref{P: sepvalues} we have $\lambda=\lambda'$.
\end{proof}

\begin{defin}\label{def:gOmc}
 Let $\widetilde \Delta_\nu(X):=\overline{\mbox{cone}}(\widetilde S_\nu(X)) \subseteq \R^n \oplus N^1(X)_\R \oplus \mathfrak{t}^*$ be the closed convex cone generated by the 
 semigroup $\widetilde S_\nu(X)$.
  We call $\widetilde \Delta_\nu(X)$ the \emph{global Okounkov moment cone} of $X$ (with respect to $\nu$). \\
 \noindent Similarly, for any variety $Y_i$ in the flag $Y_\bullet$, we define $\widetilde \Delta_{\nu_{Y_i}}(Y_i) \subseteq \R^{n-i} \oplus N^1(Y_i)_\R \oplus \mathfrak{t}^*$ to be the closed convex cone generated by the 
 corresponding semigroup $\widetilde S_{\nu_{Y_i}}(Y_i)$ for the spherical variety $Y_i$.
 \end{defin}

  \begin{remark}
  Since the semigroup $S_\nu(X)$ generating $\Delta_\nu(X)$ also keeps track of the $G$-modules from which sections with given values 
  come, the cone $\Delta_\nu(X)$ can also be seen as a global version of the moment polytope of $X$ (cf. \cite{brion-applicmoment}), with the additional information of the values of $\nu$; hence the name.
 \end{remark}

 Let $Y:=Y_1$ be the divisor of the flag $Y_\bullet$. 
 The goal now is to prove that the convex cone $\widetilde \Delta_\nu(X)$ is rational polyhedral if $\widetilde \Delta_{\nu_Y}(Y)$ is rational polyhedral.
 
 We first observe that, for an effective divisor $D$ on $X$ and a $G$-submodule $V(\lambda) \subseteq H^0(X, \O_X(D))$, because of the $G$-equivariance, the 
 restriction map
 \begin{align}
  V(\lambda) \to H^0(Y, \O_Y(D)), \quad s \mapsto s\mid_Y, \label{E: equivrestr}
 \end{align}
is either zero or injective. This motivates the following definition.
\begin{defin}
\noindent (i) 
Let $J\subseteq \overline{\mbox{Eff}}(X) \times \mathfrak{t}^*$ be the subset of all pairs $(D, \lambda)$ with $D$ and $\lambda$ as above, and such that the restriction map \eqref{E: equivrestr} is 
injective, and define $K \subseteq \overline{\mbox{Eff}}(X) \times \mathfrak{t}^*$ to be the closed convex cone generated by $J$.\\

\noindent (ii) Let $J_Y \subseteq \overline{\mbox{Eff}}(Y) \times \mathfrak{t}^*$ be the subset of all pairs $(D\mid_Y, \lambda)$ with $(D,\lambda)\in J$, and define $K_Y$ to be the closed convex cone generated by $J_Y$.
\end{defin}

\begin{lemma}\label{lemma:Kpolyhedral}
 The convex cone $K$ is rational polyhedral.
\end{lemma}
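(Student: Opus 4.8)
The plan is to identify $J$ with the support of the algebra of highest-weight vectors of a suitable finitely generated graded $G$-algebra, and then to conclude by finite generation. Let $R = \Cox(X) = \bigoplus_{[D] \in \Cl(X)} H^0(X, \O_X(D))$ be the Cox ring of $X$. Since $X$ is a Mori dream space (\cite[Theorem~4.1.1]{perrin}), $R$ is a finitely generated $\C$-algebra, and it carries a $G$-action compatible with the $\Cl(X)$-grading. The $G$-invariant section $s_1 \in H^0(X, \O_X(E_1))$ is homogeneous of weight $0$, so the ideal $(s_1)$ is $G$-stable and homogeneous and the quotient $\bar R := R/(s_1)$ is a finitely generated $\Cl(X)$-graded $G$-algebra. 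Restriction of sections to $Y = E_1$ defines a graded algebra homomorphism from $R$ to the section algebra $\bigoplus_{[D]} H^0(Y, \O_Y(D\mid_Y))$ of the integral variety $Y$; its kernel in degree $[D]$ is the space $s_1 H^0(X, \O_X(D - E_1))$ of sections vanishing on $Y$, so the total kernel is the ideal $(s_1)$. Hence $\bar R$ is isomorphic to a subalgebra of a domain, and is therefore a domain.

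Because $X$ is spherical, each $H^0(X, \O_X(D))$ is a multiplicity-free $G$-module, and so is every homogeneous piece $\bar R_{[D]} = H^0(X, \O_X(D))/s_1 H^0(X, \O_X(D-E_1))$. By the observation preceding \eqref{E: equivrestr}, the restriction $V(\lambda) \to H^0(Y, \O_Y(D\mid_Y))$ is injective precisely when the unique copy of $V(\lambda)$ survives in this quotient; thus $(D, \lambda) \in J$ if and only if $V(\lambda) \subseteq \bar R_{[D]}$. Writing $U \subseteq B$ for the maximal unipotent subgroup, this means that $J$ is exactly the support $\{([D], \lambda) : (\bar R^U)_{[D], \lambda} \neq 0\}$ of the algebra of $U$-invariants $\bar R^U$, graded by $\Cl(X) \times X^*(T)_+$, all of whose graded pieces are at most one-dimensional by multiplicity-freeness.

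I would then conclude as follows. Since $G$ is reductive and $\bar R$ is a finitely generated $G$-algebra, a classical theorem of Had\v{z}iev and Grosshans (finite generation of $U$-invariants) shows that $\bar R^U$ is a finitely generated algebra, and it inherits the $\Cl(X)$-grading since that grading commutes with the $G$-action. As $\bar R$, hence $\bar R^U$, is a domain, every product of highest-weight vectors is nonzero, so the support of $\bar R^U$ is precisely the submonoid of $\Cl(X) \times X^*(T)_+$ generated by the finitely many bidegrees of a set of homogeneous algebra generators; in particular $J$ is a finitely generated monoid. Mapping $\Cl(X) \to N^1(X)$ and $X^*(T)_+ \hookrightarrow \mathfrak{t}^*$, it follows that $K = \overline{\mbox{cone}}(J)$ is the cone over a finitely generated monoid, hence rational polyhedral. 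The main obstacle is the reduction carried out in the first two paragraphs: one must verify that the kernel of restriction is exactly the principal ideal $(s_1)$ and that multiplicity-freeness identifies $J$ with the support of $\bar R^U$, so that the domain property of $\bar R$ makes $J$ a genuine monoid. The finite-generation inputs, namely that $\Cox(X)$ is finitely generated and the Had\v{z}iev--Grosshans theorem, are then routine to apply; minor technical points such as possible torsion in $\Cl(X)$ are irrelevant after passing to $N^1(X)_\R$.
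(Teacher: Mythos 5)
Your proof is correct, and it takes a genuinely different (though closely related) route. The paper uses the same two inputs---finite generation of $\Cox(X)$ via the Mori dream space property, and Grosshans' theorem on $U$-invariants---but applies Grosshans \emph{upstairs}: it chooses bihomogeneous generators $\zeta_1,\ldots,\zeta_N$ of $\Cox(X)^U$, splits them according to whether restriction to $Y$ is injective or zero on the corresponding $V(\lambda_i)$, and then shows by a cancellation argument (using multiplicity-freeness and, implicitly, that monomials in the $\zeta_i$ are nonzero) that any highest-weight section with $(D,\lambda)\in J$ is a polynomial in the ``surviving'' generators $\zeta_1,\ldots,\zeta_\ell$ alone, concluding $K=\mbox{cone}\{(D_1,\lambda_1),\ldots,(D_\ell,\lambda_\ell)\}$. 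You instead quotient first, identifying the kernel of restriction with the principal ideal $(s_1)$ (valid since $X$ is normal and $E_1$ is a prime divisor, both standing assumptions) and applying Grosshans directly to $\bar R=\Cox(X)/(s_1)$. The two arguments are tightly parallel: the paper's generators with zero restriction lie in $(s_1)$, so the images of $\zeta_1,\ldots,\zeta_\ell$ generate $\bar R^U$, and both proofs ultimately exhibit the same finite generating set for $K$. What your version buys is the cleaner and slightly stronger structural statement that $J$ is exactly the support of $\bar R^U$ and hence a finitely generated monoid, at the modest extra cost of verifying that the kernel is $(s_1)$ and that $\bar R$ is a domain; note that the domain property is only needed for the inclusion of the generated monoid into $J$, and for polyhedrality of $K$ alone it would suffice that each generator's bidegree lies in $J$, which is automatic since the generators are nonzero in $\bar R^U$. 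Your caveat about torsion in $\Cl(X)$ and the choice of grading group is at the same level of precision as the paper's own treatment, so nothing is lost there.
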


\begin{proof}
 Since $X$ is a Mori dream space, the Cox ring, $\mbox{Cox}(X)$, of $X$ is a finitely generated $\C$-algebra. Hence, the subalgebra $\mbox{Cox}(X)^U$ of unipotent invariants 
 is also finitely generated over $\C$ (cf. \cite[Theorem 16.2]{Gr}). We can then choose a set of generators $\zeta_1,\ldots, \zeta_N$ for $\mbox{Cox}(X)^U$ which are homogeneous with respect to the grading by divisors
 as well as with respect to the grading of the $U$-invariants by the weights, i.e., we can assume that $\zeta_i \in V(\lambda_i)^U \subseteq H^0(X, \O_X(D_i))^U$, for an effective divisor $D_i$ and 
 a weight $\lambda_i$. Moreover, we can assume that the restriction maps \eqref{E: equivrestr} are injective for $\lambda_1,\ldots, \lambda_\ell$, and zero for 
 $\lambda_{\ell+1},\ldots, \lambda_N$.
 
 Now, let $(D, \lambda)$ be a pair for which the map \eqref{E: equivrestr} is injective, and let $s \in V(\lambda) \subseteq H^0(X, \O_X(D))$ be a highest weight section, so $s\in V(\lambda)^U$.
 Writing $$s=\sum_{\underline{m}=(m_1,\ldots, m_N) \in \N_0^N} c_{\underline{m}} \zeta_1^{m_1} \cdots \zeta_N^{m_N}, \quad c_{\underline{m}} \in \C,$$ 
 we see 
 that the sum of the terms for $(m_1,\ldots, m_N)$ not satisfying $D=m_1D_1+\cdots +m_N D_N$ is zero. The same holds for the sum of the terms for $(m_1,\ldots, m_N)$ 
 not satisfying $\lambda=m_1\lambda_1+\cdots +m_N\lambda_N$. Hence, we can write $s$ as a linear combination of monomials in the $s_i$ for which 
 both conditions are satisfied. 
 
 We now claim that the sum $s=\sum_{\overline{m} \in \N_0^N} c_{\underline{m}}\zeta_1^{m_1}\cdots \zeta_N^{m_N}$ is even a linear combination of monomials only in 
 $\zeta_1,\ldots, \zeta_\ell$, that is, we have $m_{\ell+1}=\cdots=m_N=0$ for all $\underline{m}$ occurring in the sum. Indeed, if we had $m_i>0$ for some 
 $i \in \{\ell+1,\ldots, N\}$, the monomial $\zeta_1^{m_1}\cdots \zeta_N^{m_N}$ would restrict to zero on $Y$, so that 
 the restriction map \eqref{E: equivrestr} would be zero for the highest weight module $V(m_1\lambda_1+\cdots m_N \lambda_n) \subseteq H^0(X, \O_X(m_1D_1+\cdots + m_ND_N))=H^0(X, \O_X(D))$. 
 In particular, this would contradict the fact that $m_1\lambda_1+\cdots +m_N\lambda_N=\lambda$. 
 
 It follows that $K$ is the closed convex cone generated by the points $(D_1,\lambda_1),\ldots, (D_\ell, \lambda_\ell)$.
 \end{proof}
 
 \begin{defin}
  Let $\varphi\colon  N^1(X)_\R \oplus \mathfrak{t}^* \to N^1(Y)_\R \oplus \mathfrak{t}^*$ be the unique $\R$-linear map 
  satisfying $\varphi((D, \lambda))=(D\mid_Y, \lambda)$ for all pairs $(D, \lambda)$ where $D$ is a divisor and $\lambda\in \mathfrak{t}^*$.
 \end{defin}
 
 Lemma~\ref{lemma:Kpolyhedral} has the following immediate corollary.
 
 \begin{cor}
  The convex cone $K_Y$ is given by $K_Y=\varphi(K)$.
  In particular, $K_Y$ is rational polyhedral.
 \end{cor}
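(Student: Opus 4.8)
The plan is to deduce everything from the finite generation established in Lemma~\ref{lemma:Kpolyhedral}, rather than from generalities about convex cones. First I would record the tautology $J_Y=\varphi(J)$: by definition $\varphi((D,\lambda))=(D\mid_Y,\lambda)$, and $J_Y$ is precisely the set of such pairs $(D\mid_Y,\lambda)$ with $(D,\lambda)\in J$. Thus $\varphi$ carries a generating set of $K$ onto a generating set of $K_Y$, and the content of the corollary is that passing to closed convex cones commutes with applying $\varphi$ in this particular situation.

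Next I would invoke the Lemma to upgrade ``$K$ is closed'' to ``$K$ is finitely generated'': the proof of Lemma~\ref{lemma:Kpolyhedral} exhibits $K$ as the cone over the finitely many points $(D_1,\lambda_1),\ldots,(D_\ell,\lambda_\ell)$, each of which lies in $J$. A finitely generated cone is polyhedral, hence closed, and its image under the $\R$-linear map $\varphi$ is again finitely generated, now by the points $\varphi((D_i,\lambda_i))=(D_i\mid_Y,\lambda_i)$. Consequently $\varphi(K)$ is itself a closed convex cone. This is the single substantive step, and the main obstacle: for an arbitrary closed convex cone the image under a linear map need not be closed, so without the finite generation coming from the Lemma the identity $K_Y=\varphi(K)$ could genuinely fail. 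This is exactly why the statement is an immediate corollary of the Lemma and not a formal consequence of the definition of $\varphi$.

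With the closedness of $\varphi(K)$ in hand, the equality follows from two inclusions. For $\varphi(K)\subseteq K_Y$, each generator $(D_i\mid_Y,\lambda_i)$ lies in $J_Y$, hence in $K_Y$; since $K_Y$ is a closed convex cone it contains the cone generated by these points, which is all of $\varphi(K)$. For $K_Y\subseteq\varphi(K)$, the inclusion $J\subseteq K$ gives $J_Y=\varphi(J)\subseteq\varphi(K)$, and since $\varphi(K)$ is now known to be a closed convex cone it contains $\overline{\mbox{cone}}(J_Y)=K_Y$. Combining the two inclusions yields $K_Y=\varphi(K)$.

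Finally, the rationality and polyhedrality of $K_Y$ are inherited for free: the map $\varphi$ is defined over $\Q$, being the restriction of divisor classes to $Y$ on the first factor and the identity on $\mathfrak{t}^*$, so it preserves the relevant rational structures and sends the rational polyhedral cone $K$ to a rational polyhedral cone. Hence $K_Y=\varphi(K)$ is rational polyhedral, as claimed.
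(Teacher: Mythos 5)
Your proof is correct and is precisely the argument the paper leaves implicit when it calls the statement an ``immediate corollary'' of Lemma~\ref{lemma:Kpolyhedral}: finite generation of $K$ by the points $(D_1,\lambda_1),\ldots,(D_\ell,\lambda_\ell)\in J$ makes $\varphi(K)$ a (rational) polyhedral, hence closed, cone, so that $\overline{\mbox{cone}}(J_Y)=\overline{\mbox{cone}}(\varphi(J))=\varphi(K)$. You also correctly flag the one substantive point --- that for a general closed convex cone the image under a linear map need not be closed, which is exactly why the lemma is needed --- so nothing is missing.
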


\begin{defin}
We define the semigroups 
 \begin{align}
  \widetilde J&:=\{(\nu(s), D, \lambda) \mid s \in V(\lambda)\setminus \{0\} \subseteq H^0(X, \O_X(D)), \; (D, \lambda) \in J\}, \\
  \widetilde J_Y&:=\{(\nu_Y(t), E, \lambda) \mid t \in V(\lambda) \setminus \{0\} \subseteq H^0(Y, \O_Y(E)), \; (E, \lambda) \in J_Y\},
 \end{align}
and the homomorphism of semigroups
\begin{align*}
 \widetilde\varphi\colon  \widetilde J \to \widetilde J_Y, \quad (\nu(s), D, \lambda) \mapsto (\nu_Y(s\mid_Y), D\mid_Y, \lambda).
\end{align*}
We also denote by $\widetilde K=\overline{\mbox{cone}}(\widetilde J)$ and $\widetilde K_Y=\overline{\mbox{cone}}(\widetilde J_Y)$ the closed convex cones generated by $\widetilde J$ and $\widetilde J_Y$, respectively. The map $\widetilde\varphi$ extends uniquely to a continuous map $\widetilde K\to \widetilde K_Y$, also denoted by $\widetilde\varphi$, for simplicity, between the respective closed convex cones.
\end{defin}

Since the convex cone $K$ is generated by pairs $(D, \lambda)$ for which the restriction map \eqref{E: equivrestr} is injective, the map $\widetilde\varphi$ between the convex cones is surjective, i.e., 
\begin{align*}
 \widetilde K_Y=\widetilde\varphi(\widetilde K).
\end{align*}

\begin{lemma} \label{L: interior}
 For any big divisor $D$ on $Y$ there exist a rational weight $\lambda$ and $a \in \Q^{n-1}$ such that $(a, D, \lambda)$ is 
 in the relative interior of $\widetilde \Delta_{\nu_Y}(Y)$.
\end{lemma}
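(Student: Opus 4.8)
The plan is to place the required point in the fibre of $\widetilde\Delta_{\nu_Y}(Y)$ over the class $[D]$ and then use that $D$ is big to upgrade fibrewise relative-interiority to relative-interiority in the whole cone. Write $\pi\colon\R^{n-1}\oplus N^1(Y)_\R\oplus\mathfrak{t}^*\to N^1(Y)_\R$ for the projection onto the divisor factor. I would invoke the standard fact that for a convex cone $C$ and a linear projection $\pi$ one has $x\in\mathrm{relint}(C)$ if and only if $\pi(x)\in\mathrm{relint}(\pi(C))$ and $x\in\mathrm{relint}\big(C\cap\pi^{-1}(\pi(x))\big)$. Since every effective class is the image under $\pi$ of some triple in $\widetilde S_{\nu_Y}(Y)$ (any nonzero section lies in some simple summand), we have $\pi(\widetilde\Delta_{\nu_Y}(Y))=\Eff(Y)$; and bigness of $D$ means precisely that $[D]\in\mathrm{int}\,\Eff(Y)=\mathrm{relint}(\pi(\widetilde\Delta_{\nu_Y}(Y)))$. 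Thus the problem reduces to finding a rational point $(a,\lambda)$ in the relative interior of the fibre over $[D]$.

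To describe that fibre, consider the convex body obtained from the multiples of $D$,
\[
\Delta^{\mathrm{mom}}(D)=\overline{\big\{\,(\tfrac1m\nu_Y(s),\tfrac1m\lambda) : m\ge 1,\ 0\neq s\in V(\lambda)\subseteq H^0(Y,\O_Y(mD))\,\big\}}\subseteq\R^{n-1}\oplus\mathfrak{t}^*.
\]
By the description of the fibres of a global Okounkov body over big classes (cf.\ \cite{LM09}), which applies after adjoining the highest-weight coordinate because $\lambda$ is constant on each simple summand, the fibre $\widetilde\Delta_{\nu_Y}(Y)\cap\pi^{-1}([D])$ equals $\{[D]\}\times\Delta^{\mathrm{mom}}(D)$. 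So it suffices to find a rational point in the relative interior of $\Delta^{\mathrm{mom}}(D)$.

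The key point is that $\Delta^{\mathrm{mom}}(D)$ has full dimension $n-1=\dim Y$. For the upper bound, $\nu_Y$ is valuation-like with one-dimensional leaves, so the number of distinct values it attains on single summands of $H^0(Y,\O_Y(mD))$ is at most $\dim H^0(Y,\O_Y(mD))=O(m^{n-1})$, while all the rescaled points $(\tfrac1m\nu_Y(s),\tfrac1m\lambda)$ lie in a fixed bounded region; comparing growth rates forces $\dim\Delta^{\mathrm{mom}}(D)\le n-1$. For the lower bound, the minimum property $\nu_Y(s)=\min\{\nu_Y(r_\xi)\}$ (as in the proof that $\widetilde S_{\nu_Y}(Y)$ is a semigroup) shows every value of $\nu_Y$ on $H^0(Y,\O_Y(mD))$ is already attained on a single summand; hence the image of $\Delta^{\mathrm{mom}}(D)$ under forgetting $\lambda$ is exactly the Newton--Okounkov body of $D$ for $\nu_Y$, which is $(n-1)$-dimensional since $D$ is big (cf.\ \cite{LM09}). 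Therefore $\dim\Delta^{\mathrm{mom}}(D)=n-1$, its relative interior is nonempty, and since it is the closure of a set of rational points spanning an $(n-1)$-dimensional affine subspace, a positive rational average of $n$ affinely independent such points yields a rational $(a,\lambda)$ in $\mathrm{relint}\,\Delta^{\mathrm{mom}}(D)$. Combining with the first paragraph gives $(a,D,\lambda)\in\mathrm{relint}\,\widetilde\Delta_{\nu_Y}(Y)$ with $a\in\Q^{n-1}$ and $\lambda$ rational.

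The main obstacle is the identification of the fibre over $[D]$ with $\Delta^{\mathrm{mom}}(D)$: a priori the slice of $\widetilde\Delta_{\nu_Y}(Y)$ could be strictly larger than the body built from multiples of $D$ alone, since the cone is generated using all effective divisors, and in that case a relative interior point of $\Delta^{\mathrm{mom}}(D)$ would only lie on the relative boundary of the slice. Establishing the equality of the two, so that the relative interior computed from multiples of $D$ is genuinely the relative interior of the slice, is exactly where bigness of $D$ enters, via the fibre theorem for big classes together with the full-dimensionality estimate above.
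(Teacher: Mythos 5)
Your reduction to the fibre over $[D]$ is sound convex analysis, but the decisive step --- the identification $\widetilde\Delta_{\nu_Y}(Y)\cap\pi^{-1}([D])=\{[D]\}\times\Delta^{\mathrm{mom}}(D)$ --- is not covered by the result you cite, and you do not prove it. The fibre theorem of Lazarsfeld--Musta\c{t}\u{a} is established for admissible flags of subvarieties (smooth at the distinguished point) on a smooth projective variety, and for the global Okounkov body \emph{without} the weight coordinate. Here $Y$ is only $\Q$-factorial (the flag members $Y_i$ are intersections of invariant divisors, smooth only along the closed orbit), $\nu_Y$ is the hybrid partial-flag/Bott--Samelson valuation-like function pulled back along a birational morphism, and --- most seriously --- the augmented fibre sits in $\R^{n-1}\oplus\mathfrak{t}^*$, where by your own dimension count it has dimension $n-1$, i.e.\ positive codimension. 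The proof of the fibre theorem in \cite{LM09} runs through the volume identity $\mathrm{vol}(\Delta(\xi))=\frac{1}{n!}\mathrm{vol}_X(\xi)$ and continuity of the volume function, which is vacuous for such lower-dimensional fibres; your one-line remark that the theorem ``applies after adjoining the highest-weight coordinate because $\lambda$ is constant on each simple summand'' does not substitute for an argument. (A genuine patch is available from the paper itself: since $p\colon \widetilde S_{\nu_Y}(Y)\to S_{\nu_Y}(Y)$ is a semigroup isomorphism, $\lambda$ is an additive, hence linear, function of $(\nu_Y(s),[E])$ on the semigroup, so $\widetilde\Delta_{\nu_Y}(Y)$ is the image of $\Delta_{\nu_Y}(Y)$ under a linear graph map and the $\mathfrak{t}^*$-coordinate can be eliminated --- but even then you would still need the fibre theorem for $\Delta_{\nu_Y}(Y)$ itself outside the setting in which it is proved in \cite{LM09}.)

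There is also an architectural mismatch: the lemma is only ever invoked inside the induction, where $\widetilde\Delta_{\nu_Y}(Y)$ is \emph{already assumed} rational polyhedral, and the paper's proof uses exactly this hypothesis and nothing else. One picks rational generators $(a_i,D_i,\lambda_i)$ of the polyhedral cone; their $N^1$-components may be assumed to generate $\Eff(Y)$, so bigness of $D$ gives $D=\sum_i\xi_i D_i$ with all $\xi_i\in\Q_{>0}$, and then $(a,D,\lambda)=\sum_i\xi_i(a_i,D_i,\lambda_i)$, being a strictly positive combination of all generators, is a rational point of the relative interior --- three lines, no fibre theorem. Your proposal never uses the polyhedrality hypothesis available at the point of use and instead attempts a stronger, unconditional statement; that ambition is precisely what creates the gap above. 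Two smaller debts in the same direction: the boundedness of $\frac{1}{m}\nu_Y$ on $H^0(Y,\O_Y(mD))$, needed for your lattice-point count, should be justified for this particular valuation-like function; and $\pi(\widetilde\Delta_{\nu_Y}(Y))$ need not be closed, so $\pi(\widetilde\Delta_{\nu_Y}(Y))=\Eff(Y)$ should be weakened to an equality of closures (harmless, since relative interiors are insensitive to closure, but it should be said).
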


\begin{proof}
 By the inductive assumption, the convex cone $\widetilde \Delta_{\nu_Y}(Y)$ is rational polyhedral. Let $\{(a_i, D_i, \lambda_i)\;|\; i\in\{1,\ldots,m\} \}$, for some $m \in \N$, be a set of 
 rational generators. We may assume that the rational divisors $D_1,\ldots, D_m$ generate the pseudoeffective cone $\overline{\mbox{Eff}}(Y)$ of $Y$. Note that we do not require the $\lambda_i$ nor the $D_i$ to be distinct here.
 
 Since $D$ is in the interior of $\overline{\mbox{Eff}}(Y)$, we can write $D$ as a linear combination $D=\xi_1 D_1+\cdots +\xi_m D_m$ with rational coefficients 
 $\xi_1,\ldots, \xi_m>0$.
 
 Put
\begin{align*}
a = \sum_{i=1}^m \xi_i a_i, \quad \lambda = \sum_{i=1}^m \xi_i \lambda_i.
\end{align*}
Then 
\begin{align*}
(a,D,\lambda)=\sum_{i=1}^m \xi_i (a_i, D_i, \lambda_i)
\end{align*}
is in the relative interior of $\widetilde \Delta_{\nu_Y}(Y)$.
\end{proof}

\begin{defin}
 Let $\pi\colon  \R^{n-1} \oplus N^1(Y)_\R \oplus \mathfrak{t}^* \to N^1(Y)_\R \oplus \mathfrak{t}^*$ denote the projection onto the 
direct summand $N^1(Y)_\R \oplus \mathfrak{t}^*$.
\end{defin}

\begin{lemma}\label{lemma:tKYpolyhedral}
If the convex cone $\widetilde \Delta_{\nu_Y}(Y)$ is rational polyhedral, then
 \begin{align*}
  \widetilde K_Y=\widetilde \Delta_{\nu_Y}(Y) \cap \pi^{-1}(K_Y).
 \end{align*}
In particular, the convex cone $\widetilde K_Y$ is rational polyhedral, being the intersection of two rational polyhedral convex cones.
\end{lemma}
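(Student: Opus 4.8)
The plan is to prove the set equality $\widetilde K_Y = \widetilde \Delta_{\nu_Y}(Y) \cap \pi^{-1}(K_Y)$ by establishing the two inclusions separately. The inclusion $\widetilde K_Y \subseteq \widetilde \Delta_{\nu_Y}(Y) \cap \pi^{-1}(K_Y)$ is the easier one. First I would observe that $\widetilde J_Y \subseteq \widetilde S_{\nu_Y}(Y)$ by definition, so $\widetilde K_Y = \overline{\mbox{cone}}(\widetilde J_Y) \subseteq \overline{\mbox{cone}}(\widetilde S_{\nu_Y}(Y)) = \widetilde \Delta_{\nu_Y}(Y)$. Next, for a generator $(\nu_Y(t), E, \lambda) \in \widetilde J_Y$ we have, by definition of $\widetilde J_Y$, that $(E, \lambda) \in J_Y$, hence $\pi(\nu_Y(t), E, \lambda) = (E, \lambda) \in J_Y \subseteq K_Y$. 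Since $\pi$ is linear and $K_Y$ is a convex cone, the image under $\pi$ of the cone generated by $\widetilde J_Y$ lands in $K_Y$, giving $\widetilde K_Y \subseteq \pi^{-1}(K_Y)$. Taking closures is compatible with this since both target cones are closed. Combining these gives the first inclusion.

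The reverse inclusion $\widetilde \Delta_{\nu_Y}(Y) \cap \pi^{-1}(K_Y) \subseteq \widetilde K_Y$ is the substantive part, and this is where Lemma~\ref{L: interior} enters. Let $(a, E, \lambda)$ be a point in $\widetilde \Delta_{\nu_Y}(Y) \cap \pi^{-1}(K_Y)$; I would first reduce to the case where $(E, \lambda)$ lies in the relative interior of $K_Y$ and where $E$ is big, by a perturbation argument, since the cone $\widetilde K_Y$ is closed and such interior points are dense in $\pi^{-1}(K_Y) \cap \widetilde \Delta_{\nu_Y}(Y)$. For such a point, the condition $(E,\lambda) \in K_Y$ should be upgraded to the statement that the relevant restriction map is injective, so that sections realizing the value genuinely contribute to $\widetilde J_Y$ rather than to the part of $\widetilde S_{\nu_Y}(Y)$ where the restriction vanishes. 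The key geometric input is that a point of $\pi^{-1}(K_Y)$ that is already in $\widetilde \Delta_{\nu_Y}(Y)$ must in fact be approximable by semigroup elements $(\nu_Y(t), E', \lambda')$ with $(E', \lambda') \in J_Y$, which is precisely the defining condition for membership in $\widetilde J_Y$.

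The main obstacle, and the step I would spend the most care on, is showing that a point of $\widetilde \Delta_{\nu_Y}(Y)$ whose $\pi$-image lies in $K_Y$ is actually generated by elements of $\widetilde J_Y$ as opposed to arbitrary elements of $\widetilde S_{\nu_Y}(Y)$. The difficulty is that $\widetilde S_{\nu_Y}(Y)$ contains triples $(\nu_Y(t), E', \mu)$ coming from modules $V(\mu) \subseteq H^0(Y, \O_Y(E'))$ where the pair $(E', \mu)$ need not come from the restriction of a pair on $X$ via $J$. Here Lemma~\ref{L: interior} provides the crucial surjectivity-type control: for any big $E$ on $Y$ it furnishes an interior point of $\widetilde \Delta_{\nu_Y}(Y)$ over $E$ with a rational weight, which lets me write a given interior point as a positive rational combination of generators that can each be arranged to lie over pairs in $J_Y$. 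The argument then concludes by expressing the target point as such a combination, invoking that $\pi^{-1}(K_Y)$ cuts out exactly the admissible weights, and passing to the closure; the final assertion that $\widetilde K_Y$ is rational polyhedral follows immediately because it is exhibited as the intersection of the two rational polyhedral cones $\widetilde \Delta_{\nu_Y}(Y)$ and $\pi^{-1}(K_Y)$, the latter being a preimage of a rational polyhedral cone under a rational linear projection.
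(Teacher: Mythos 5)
Your first inclusion is fine and matches the trivial direction, but the reverse inclusion --- the substantive part --- is asserted rather than proved. The center of your argument is the sentence claiming that a point of $\widetilde \Delta_{\nu_Y}(Y) \cap \pi^{-1}(K_Y)$ ``must in fact be approximable by semigroup elements $(\nu_Y(t), E', \lambda')$ with $(E', \lambda') \in J_Y$''; this is exactly the statement to be established, and nothing in your sketch delivers it. A point of the intersection is a priori only a limit of conic combinations of arbitrary elements of $\widetilde S_{\nu_Y}(Y)$, and the individual generators in such a combination need not project into $K_Y$ at all --- only their combination does --- so ``invoking that $\pi^{-1}(K_Y)$ cuts out exactly the admissible weights'' gives you no way to rearrange the combination so as to use only elements of $\widetilde J_Y$. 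The paper's proof supplies the missing mechanism by observing the semigroup identity $\widetilde J_Y = \widetilde S_{\nu_Y}(Y) \cap \pi^{-1}(J_Y)$ and appealing to \cite[Prop.\ A.1]{LM09} (and its proof): once $\pi^{-1}(J_Y)$ is known to meet the relative interior of $\widetilde \Delta_{\nu_Y}(Y)$, that proposition yields $\overline{\mbox{cone}}(\widetilde J_Y)=\widetilde \Delta_{\nu_Y}(Y)\cap \pi^{-1}(K_Y)$; the underlying trick (adding a large multiple of a deep interior semigroup point to push arbitrary semigroup elements into the subcone, then scaling it away in the limit) is not recoverable from the density/perturbation argument you propose.

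There is a second concrete gap in how you use Lemma~\ref{L: interior}. That lemma produces, for a big divisor $E$ on $Y$, a relative interior point $(a, E, \lambda)$ of $\widetilde \Delta_{\nu_Y}(Y)$, but with no control on $\lambda$: there is no reason why $(E,\lambda)$ should lie in $J_Y$, i.e., why $\lambda$ should arise as the highest weight of some $V(\lambda) \subseteq H^0(X, \O_X(D))$ restricting injectively to $Y$, which is what ``lie over pairs in $J_Y$'' requires. The paper closes precisely this hole: take $D$ ample on $X$ and $E=D\mid_Y$, replace $(a,E,\lambda)$ by a positive integer multiple so that it is integral and, by Serre vanishing, the restriction $R_D\colon H^0(X,\O_X(D)) \to H^0(Y,\O_Y(E))$ is surjective; then the $G$-equivariance of $R_D$, combined with the injective-or-zero dichotomy for the maps \eqref{E: equivrestr}, forces $V(\lambda)\subseteq H^0(X,\O_X(D))$ to restrict injectively, so that $(E,\lambda)\in J_Y$ and $\pi^{-1}(J_Y)$ indeed meets the relative interior of $\widetilde \Delta_{\nu_Y}(Y)$. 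Without this step, and without Prop.\ A.1 to convert it into the set equality, your outline restates the conclusion rather than proving it; the final polyhedrality assertion, by contrast, is fine once the equality is in place, and agrees with the paper.
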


\begin{proof}
Notice that $\widetilde J_Y=\widetilde \Delta_{\nu_Y}(Y)\cap \pi^{-1}(J_Y)$ and that the convex cone $\pi^{-1}(K_Y)$ is rational polyhedral and generated by $\pi^{-1}(J_Y)$.

We claim that $\pi^{-1}(J_Y)$ intersects the relative interior of $\widetilde \Delta_{\nu_Y}(Y)$. Given this claim, the lemma is a consequence of 
 \cite[Prop. A.1.]{LM09} (and its proof).
 
 In order to prove the claim, let $D$ be an ample divisor on $X$, and let $E:=D\mid_Y$. By Lemma \ref{L: interior}, there is a point of the form $(a, E, \lambda)$ in the relative interior of $\widetilde \Delta_{\nu_Y}(Y)$. By replacing 
 this triple by a positive integer multiple, if necessary, we may assume that $(a, E, \lambda)$ is an integral point in $\widetilde \Delta_{\nu_Y}(Y)$, and, by Serre's vanishing theorem, that the restriction map $R_{D}\colon  H^0(X, \O_X(D)) \to H^0(Y, \O_Y(E))$ of sections is surjective.
 
 By the $G$-equivariance of $R_D$, 
 the weight $\lambda$ occurs as the highest weight of an irreducible $G$-submodule $V(\lambda) \subseteq H^0(X, \O_X(D))$, and the restriction $R_{D}\mid_{V(\lambda)}\colon  V(\lambda) \to H^0(Y, \O_Y(E))$ is injective, i.e., $(E, \lambda) \in J_Y$. This finishes the proof.
\end{proof}

The above lemma enables us to deduce the rational polyhedrality of the convex cone $\widetilde K$.

\begin{lemma}
If the convex cone $\widetilde \Delta_{\nu_Y}(Y)$ is rational polyhedral, then
 \begin{align*}
 \widetilde K= ((\{0\} \times \R_{\geq 0}^{n-1}) \times K)
 \cap  \widetilde\varphi^{-1}(\widetilde K_Y) 
 \end{align*}
In particular, the convex cone $\widetilde K$ is rational polyhedral.
\end{lemma}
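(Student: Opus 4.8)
The plan is to prove the set equation first, and then deduce rational polyhedrality as an immediate corollary (since both $((\{0\} \times \R_{\geq 0}^{n-1}) \times K)$ and $\widetilde\varphi^{-1}(\widetilde K_Y)$ will be rational polyhedral). For the set equality, I would prove the two inclusions separately. The inclusion $\widetilde K \subseteq ((\{0\} \times \R_{\geq 0}^{n-1}) \times K) \cap \widetilde\varphi^{-1}(\widetilde K_Y)$ should be the routine direction: it suffices to check it on the generators $\widetilde J$ of $\widetilde K$ and then pass to the closed cone. A generator $(\nu(s), D, \lambda) \in \widetilde J$ has $(D,\lambda) \in J \subseteq K$ and first-coordinate block $\nu(s) \in \N_0^n$, whose first component is $\nu_1(s) = \ord_Y(s)$; for $s \in V(\lambda)$ whose restriction to $Y$ is nonzero (which holds since $(D,\lambda)\in J$ means the restriction map is injective) we have $\nu_1(s) = 0$, placing the point in $\{0\} \times \R^{n-1}_{\geq 0} \times K$. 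Simultaneously $\widetilde\varphi(\nu(s), D, \lambda) = (\nu_Y(s|_Y), D|_Y, \lambda) \in \widetilde J_Y \subseteq \widetilde K_Y$, giving membership in $\widetilde\varphi^{-1}(\widetilde K_Y)$.

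**The hard inclusion.**
The reverse inclusion is where the real content lies. I would take a point $x$ in the right-hand side: so $x = (0, a, D, \lambda)$ with $(D,\lambda) \in K$, $a \in \R^{n-1}_{\geq 0}$, and $\widetilde\varphi(x) = (a, D|_Y, \lambda) \in \widetilde K_Y$. I must produce $x$ as a limit of nonnegative combinations of elements of $\widetilde J$, i.e.\ show $x \in \widetilde K$. The key geometric input is that $\widetilde\varphi\colon \widetilde K \to \widetilde K_Y$ is surjective (established just before the lemma). The obstacle is that surjectivity of $\widetilde\varphi$ only guarantees \emph{some} preimage in $\widetilde K$ of $\widetilde\varphi(x)$, whereas I need precisely $x$ itself — that is, I must control the fiber of $\widetilde\varphi$ and show that the extra constraints $(D,\lambda)\in K$ and first-coordinate $=0$ pin the preimage down to $x$. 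The natural approach is to understand the kernel direction of $\widetilde\varphi$: since $\widetilde\varphi$ acts as the identity on the last $n-1$ Okounkov coordinates and on $\lambda$, and acts on $(D) \mapsto (D|_Y)$ via $\varphi$, the fiber of $\widetilde\varphi$ over $\widetilde\varphi(x)$ inside $\widetilde K$ varies only in the $D$-direction along $\ker\varphi$ and in the suppressed first Okounkov coordinate $a_1 = \ord_Y(\,\cdot\,)$. The constraint $x \in \{0\}\times\R^{n-1}_{\geq 0}\times K$ forces $a_1 = 0$ and $(D,\lambda)\in K$, and I would argue these two conditions together single out $x$ among the fiber.

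**Carrying out the hard step.**
Concretely, I would use the generators $(D_1,\lambda_1),\dots,(D_\ell,\lambda_\ell)$ of $K$ produced in the proof of Lemma~\ref{lemma:Kpolyhedral}, each coming from a $U$-invariant Cox generator $\zeta_i \in V(\lambda_i)^U \subseteq H^0(X,\O_X(D_i))$ whose restriction to $Y$ is nonzero, hence with $\ord_Y(\zeta_i)=0$. For such $\zeta_i$ the triple $(\nu(\zeta_i), D_i, \lambda_i) \in \widetilde J$ has vanishing first coordinate, so it lies in $\{0\}\times\R^{n-1}_{\geq 0}\times K$, and $\widetilde\varphi$ sends it to $(\nu_Y(\zeta_i|_Y), D_i|_Y, \lambda_i) \in \widetilde J_Y$. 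Thus these finitely many elements map onto a generating set for the image side. Given $x$ with $\widetilde\varphi(x)\in\widetilde K_Y$, I write $\widetilde\varphi(x)$ as a nonnegative combination of generators of $\widetilde K_Y$; lifting each generator of $\widetilde K_Y$ back to a preimage in $\widetilde K$ supported on the $\zeta_i$ (whose first coordinate is $0$), I assemble a preimage $x' \in \{0\}\times\R^{n-1}_{\geq 0}\times K$ of $\widetilde\varphi(x)$ with first coordinate $0$. It then remains to check $x' = x$: both lie in the fiber of $\widetilde\varphi$, both have first Okounkov coordinate $0$, both have $(D,\lambda) \in K$, and the remaining coordinates $(a,\lambda)$ agree because $\widetilde\varphi$ is the identity on them. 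The only ambiguity is the $D$-coordinate along $\ker\varphi$; here I would invoke that the $K$-constraint, together with the explicit generation of $K$ by the $(D_i,\lambda_i)$ from the Cox ring, removes the $\ker\varphi$ freedom. This matching of preimages — showing the additional cone and vanishing-coordinate constraints cut the $\widetilde\varphi$-fiber down to the single point $x$ — is the main obstacle and the step deserving the most care.
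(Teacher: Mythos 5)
Your easy inclusion is fine, but the hard direction has a genuine gap at precisely the step you yourself flag: the claim that the constraints ``first Okounkov coordinate $=0$'' and ``$(D,\lambda)\in K$'' cut the fiber of $\widetilde\varphi$ down to the single point $x$. Nothing in the setup makes $\widetilde\varphi$ injective on $(\{0\}\times \R_{\geq 0}^{n-1})\times K$: the map $\varphi\colon N^1(X)_\R\oplus\mathfrak{t}^*\to N^1(Y)_\R\oplus\mathfrak{t}^*$ can kill directions of $N^1(X)_\R$ meeting the span of $K$, i.e.\ $K$ may contain distinct pairs $(D,\lambda)\neq(D',\lambda)$ with $D\mid_Y=D'\mid_Y$, and the generation of $K$ by the Cox-ring data $(D_1,\lambda_1),\ldots,(D_\ell,\lambda_\ell)$ carries no injectivity statement whatsoever. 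So the preimage $x'$ you assemble need not equal $x$, and ``invoking the $K$-constraint to remove the $\ker\varphi$ freedom'' is an assertion, not an argument. There is a secondary flaw in the same step: your lifts are ``supported on the $\zeta_i$,'' but the generators of $\widetilde K_Y$ are triples $(\nu_Y(t),E,\lambda)$ for \emph{arbitrary} nonzero $t\in V(\lambda)$, and only $\nu'$ is constant on $V(\lambda)$ (Proposition~\ref{P: sepvalues}(i)); the Bott--Samelson coordinates of $\nu_Y(t)$ vary within $V(\lambda)$, so points built from the finitely many $\zeta_i$ cannot reach all generators of $\widetilde K_Y$.

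The deeper problem is structural: beyond surjectivity of $\widetilde\varphi$, your argument uses no convexity input, whereas the content of the lemma is exactly a statement of the form ``cone of an intersection equals intersection of cones,'' which is false in general ($\mbox{cone}(A\cap B)\subseteq \mbox{cone}(A)\cap\mbox{cone}(B)$ is typically strict) and needs a relative-interior hypothesis. The paper's proof first establishes the identity at semigroup level,
\begin{align*}
\widetilde J=((\{0\} \times \R_{\geq 0}^{n-1}) \times J)\cap \widetilde\varphi^{-1}(\widetilde J_Y),
\end{align*}
and then passes to the cone-level identity via \cite[Prop. A.1.]{LM09}, whose hypothesis --- that $(\{0\}\times\N_0^{n-1})\times J$ meets the relative interior of $\widetilde\varphi^{-1}(\widetilde K_Y)$ --- is verified concretely: take $(a,E,\lambda)\in\widetilde J_Y$ in the relative interior of $\widetilde K_Y$ (using Lemma~\ref{L: interior}), replace it by a large integer multiple so that $R_D\colon H^0(X,\O_X(D))\to H^0(Y,\O_Y(E))$ is surjective by Serre vanishing, and extend the section $t$ with $\nu_Y(t)=a$ to $s\in V(\lambda)\subseteq H^0(X,\O_X(D))$ with $\nu(s)=(0,a)$. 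Some mechanism of this type (an interior point feeding a statement like Prop.~A.1) is indispensable here; a pointwise lifting-and-uniqueness argument of the kind you propose cannot close the gap.
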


\begin{proof}
We use the same strategy as in Lemma~\ref{lemma:tKYpolyhedral}. If a point of the form $(\nu(s), D, \lambda)$ is in $\widetilde J$ for some $s \in V(\lambda)\setminus \{0\} \subseteq H^0(X, \O_X(D))$ and $(D, \lambda) \in J$, then $\nu(s)\in \{0\}\times \R^{n-1}$ by the definition of $J$.

This yields
 \begin{align*}
 \widetilde J=((\{0\} \times \R_{\geq 0}^{n-1}) \times J)  \cap \widetilde\varphi^{-1}(\widetilde J_Y).
 \end{align*}

We claim that $((\{0\} \times \N_{\geq 0}^{n-1}) \times J)$ intersects the relative interior of $\widetilde\varphi^{-1}(\widetilde K_Y)$. Given this claim, the lemma follows from \cite[Prop. A.1.]{LM09}.


To prove the claim, let $(a, E,\lambda)\in \widetilde J_Y$ be in the relative interior of $\widetilde K_Y$, choose $D$ such that $(D,\lambda)\in J$ and $E=D\mid_Y$, and choose $t\in V(\lambda)\setminus \{0\}\subset H^0(Y,\O_Y(E))$ such that $\nu_Y(t)=a$. Up to replacing $(a, E,\lambda)\in \widetilde J_Y$ by a positive integer multiple, we may assume that the restriction map $R_{D}\colon  H^0(X, \O_X(D)) \to H^0(Y, \O_Y(E))$ is surjective. Then we have $V(\lambda) \subseteq H^0(X, \O_X(D))$, and the map $R_{D}$ sends $V(\lambda)\subseteq H^0(X, \O_X(D))$ isomorphically to $V(\lambda)\subseteq H^0(Y, \O_Y(E))$. In other words, the section $t$ extends to a section $s\in V(\lambda)\subseteq H^0(X, \O_X(D))$.

It follows that $\nu(s)=(0,a)$, so $(\nu(s),D,\lambda)\in (\{0\} \times \N_{\geq 0}^{n-1}) \times J$ and $\widetilde\varphi(\nu(s),D,\lambda)=(t,E,\lambda)$, which finishes the proof.
\end{proof}

 We can now prove the main result, deducing the rational polyhedrality of $\widetilde \Delta_\nu(X)$ from that of $\widetilde \Delta_{\nu_Y}(Y)$. 
 
 \begin{lemma}\label{L:polyhedralinduct}
  If the convex cone $\widetilde \Delta_{\nu_Y}(Y)$ is rational polyhedral, then so is $\widetilde \Delta_\nu(X)$. 
 \end{lemma}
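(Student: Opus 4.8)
The plan is to realize $\widetilde \Delta_\nu(X)$ as the sum of the cone $\widetilde K$ (already shown to be rational polyhedral in the preceding lemma) and one extra rational ray. Concretely, I claim the semigroup identity
\[
 \widetilde S_\nu(X) = \N_0 \cdot (\nu(s_1), [E_1], 0) + \widetilde J,
\]
where $s_1 \in V(0) \subseteq H^0(X, \O_X(E_1))$ is the $G$-invariant defining section of $Y=Y_1=E_1$, and where one computes directly from the definition of $\nu$ that $\nu(s_1)=(1,0,\ldots,0)$ (so that $(\nu(s_1),[E_1],0)$ is itself a rational element of $\widetilde S_\nu(X)$, with trivial highest weight).

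First I would establish the inclusion $\subseteq$, which is the heart of the argument. Given $(\nu(s),[D],\lambda) \in \widetilde S_\nu(X)$ with $0 \neq s \in V(\lambda) \subseteq H^0(X, \O_X(D))$, set $a_1:=\nu_1(s)=\ord_{Y_1}(s)$. By Proposition~\ref{P: sepvalues}(i) every nonzero section of $V(\lambda)$ vanishes to the same order $a_1$ along the prime divisor $Y_1$; since $s_1$ vanishes to order exactly one there and is nonvanishing away from $Y_1$, the quotient $\tilde s:=s/s_1^{a_1}$ is a regular section, i.e.\ $\tilde s \in H^0(X, \O_X(D-a_1E_1))$, and the $G$-invariance of $s_1$ places $\tilde s$ in the simple submodule $V(\lambda) \subseteq H^0(X, \O_X(D-a_1E_1))$. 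By construction $\ord_{Y_1}(\tilde s)=0$, so $\tilde s$ restricts nontrivially to $Y$, whence $(D-a_1E_1, \lambda) \in J$; moreover the recursion defining $\nu$ gives $\nu(\tilde s)=(0,a_2,\ldots,a_n)$, so that $(\nu(\tilde s), D-a_1E_1, \lambda) \in \widetilde J$. Since $\nu(s)=a_1\nu(s_1)+\nu(\tilde s)$ and $[D]=a_1[E_1]+[D-a_1E_1]$, the triple decomposes as claimed. The reverse inclusion $\supseteq$ is immediate: for $k \geq 0$ and $(\nu(\tilde s), D', \lambda) \in \widetilde J$ the section $s_1^k \tilde s \in V(\lambda) \subseteq H^0(X, \O_X(D'+kE_1))$ satisfies $\nu(s_1^k \tilde s)=k\,\nu(s_1)+\nu(\tilde s)$, producing the required point of $\widetilde S_\nu(X)$.

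Passing to closed cones, the identity yields $\widetilde \Delta_\nu(X)=\overline{\R_{\geq 0} \cdot (\nu(s_1),[E_1],0) + \widetilde K}$. As $\widetilde K$ is rational polyhedral, hence generated by finitely many rational vectors, adjoining the single rational generator $(\nu(s_1),[E_1],0)$ again gives a finitely generated rational cone; such a cone is automatically closed, so the closure is superfluous and $\widetilde \Delta_\nu(X)$ is rational polyhedral.

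I expect the main obstacle to be the careful verification, in the $\subseteq$ step, that dividing by $s_1^{a_1}$ produces a genuine global section lying in the correct simple $G$-submodule and that its $\nu$-value is exactly $(0,a_2,\ldots,a_n)$; both points rest on the $G$-invariance of $s_1$ together with the fact, from Proposition~\ref{P: sepvalues}(i), that the vanishing order $a_1$ is constant across the whole module $V(\lambda)$. Everything downstream of the semigroup identity is routine convex geometry.
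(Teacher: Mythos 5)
Your proof is correct and takes essentially the same route as the paper: the paper's proof likewise divides $s$ by $s_1^{a}$, where $a=\ord_{Y}(s)$, to obtain a section $s'\in V(\lambda)\subseteq H^0(X,\O_X(D-aY))$ with nonzero restriction to $Y$, writes $(\nu(s),D,\lambda)=(\nu(s'),D-aY,\lambda)+((a,0,\ldots,0),aY,0)$, and concludes that $\widetilde\Delta_\nu(X)$ is the cone spanned by the finitely many rational generators of $\widetilde K$ together with the extra ray $((1,0,\ldots,0),Y,0)$. Your explicit semigroup identity $\widetilde S_\nu(X)=\N_0\cdot(\nu(s_1),[E_1],0)+\widetilde J$ is merely a more formal packaging of this same decomposition, with the same justifications (the $G$-invariance of $s_1$ placing $s/s_1^{a}$ in $V(\lambda)$, and finitely generated rational cones being closed).
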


 \begin{proof}
  Let $$((0,\eta_1), D_1, \lambda_1), \ldots, ((0,\eta_m), D_m, \lambda_m) \in \{0\}\times \R_{\geq 0}^{n-1} \times K \subseteq \R^n \oplus N^1(X)_\R \oplus \mathfrak{t}^*$$ be integral generators of 
  the convex cone $\widetilde K$.\\
  
  Now, let $s \in V(\lambda) \subseteq H^0(X, \O_X(D))$ be a nonzero section. If $s$ vanishes to order $a$ along $Y$, we have that $s':=\frac{s}{s_1^a} \in V(\lambda) \subseteq H^0(X, \O_X(D-aY))$ 
  is a section with nonzero restriction to $Y$, so that
  $(\nu(\xi), D-aY, \lambda) \in \widetilde J$. Then 
  \begin{align*}
  (\nu(s), D, \lambda)&=(\nu(\xi), D-aY, \lambda)+((a,0,\ldots,0), aY, 0)\\
  &\in \mbox{cone}\{((0,\eta_1), D_1,\lambda_1),\ldots, ((0,\eta_m), D_m,\lambda_m), ((1,0,\ldots, 0), Y, 0)\}.
  \end{align*}
  It follows that 
  \begin{align*}
   \widetilde \Delta_\nu(X)=\mbox{cone}\{((0,\eta_1), D_1,\lambda_1),\ldots, ((0,\eta_m), D_m,\lambda_m), ((1,0,\ldots, 0), Y, 0)\}.
  \end{align*}

 \end{proof}

\begin{thm}\label{T:polyhedral}
The convex cones $\widetilde \Delta_\nu(X)$ and $\Delta_\nu(X)$ are rational polyhedral.
\end{thm}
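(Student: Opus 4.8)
The plan is to prove Theorem~\ref{T:polyhedral} by induction on the dimension $n$ of $X$, using Lemma~\ref{L:polyhedralinduct} as the inductive step. The crucial observation is that the chain of Lemmas culminating in Lemma~\ref{L:polyhedralinduct} reduces the rational polyhedrality of $\widetilde \Delta_\nu(X)$ to that of $\widetilde \Delta_{\nu_Y}(Y)$, where $Y=Y_1$ is the first divisor of the flag $Y_\bullet$. Since $Y$ is itself a projective $\Q$-factorial spherical $G$-variety (it is the spherical variety $Y_1$ appearing in the flag, equipped with the induced valuation-like function $\nu_Y$ and with its own flag $Y_1\supset Y_2\supset\cdots\supset Y_r=Gx_0$ inherited from that of $X$), the same construction applies to it, so the induction is well-founded provided the base case is settled.

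First I would set up the induction carefully. The flag $Y_0=X\supset Y_1\supset\cdots\supset Y_r=Gx_0$ has length $r$, and each $Y_i$ is a spherical variety satisfying the running hypotheses, with the valuation-like function $\nu_{Y_i}$ defined in Section~\ref{s:globalOkounkovmomentcone}. Applying Lemma~\ref{L:polyhedralinduct} to $Y_i$ (playing the role of $X$) with divisor $Y_{i+1}$ (playing the role of $Y$) shows that $\widetilde \Delta_{\nu_{Y_i}}(Y_i)$ is rational polyhedral whenever $\widetilde \Delta_{\nu_{Y_{i+1}}}(Y_{i+1})$ is. Iterating from $i=0$ up to $i=r-1$, the whole statement for $X$ follows once we know that $\widetilde \Delta_{\nu_{Y_r}}(Y_r)$ is rational polyhedral. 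Here $Y_r=Gx_0\cong G/P$ is the closed orbit, a flag variety, whose remaining valuation-like data is precisely the Bott-Samelson \engqq{vertical flag} function $\tilde{\nu''}$ pulled back along $\tau\colon Z\to G/P$.

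The base case is therefore the assertion that the global Okounkov moment cone of $G/P$, with respect to the Bott-Samelson valuation, is rational polyhedral; I expect this to be the main obstacle, or rather the main external input. This is essentially the content of \cite{SS17}: the semigroup $\widetilde S_{\nu_{G/P}}(G/P)$ is, after projection to weights, governed by the string-polytope / Bott-Samelson description of the modules $V(\lambda)$, and the corresponding cone is known to be rational polyhedral there. I would cite that reference to close the base case, checking that the normalization of $\tau$ and the reducedness of the word $w=(s_{i_1},\ldots,s_{i_m})$ match the hypotheses needed for that result.

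Finally, once $\widetilde \Delta_\nu(X)$ is shown rational polyhedral, the rational polyhedrality of $\Delta_\nu(X)$ is immediate: the forgetful map $(\nu(s),[D],\lambda)\mapsto (\nu(s),[D])$ is an isomorphism of semigroups onto $S_\nu(X)$ by the Proposition preceding Definition~\ref{def:gOmc}, hence it induces a surjective $\R$-linear map of the generated cones that sends $\widetilde \Delta_\nu(X)$ onto $\Delta_\nu(X)$; the linear image of a rational polyhedral cone is rational polyhedral. I would write the argument in this order: reduce via the semigroup isomorphism that $\Delta_\nu(X)$ is a linear projection of $\widetilde \Delta_\nu(X)$, establish the induction on the flag length using Lemma~\ref{L:polyhedralinduct}, and invoke \cite{SS17} for the base case $G/P$.
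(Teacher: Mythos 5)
Your proposal is correct and follows essentially the same route as the paper: induction along the flag $Y_0\supset Y_1\supset\cdots\supset Y_r$ (the paper phrases it as induction on $r$ rather than on $\dim X$, but the content is identical), with Lemma~\ref{L:polyhedralinduct} as the inductive step, the base case $Y_r\cong G/P$ settled by \cite[Corollary~4.11]{SS17}, and $\Delta_\nu(X)$ obtained as the image of $\widetilde\Delta_\nu(X)$ under the linear projection forgetting the weight coordinate. Your remark that the base case really requires the \emph{moment cone} of $G/P$ (not merely its Okounkov body) to be rational polyhedral is a careful touch; it is harmless here since $H^0(G/P,\O(D))$ is an irreducible $G$-module, so the highest weight depends linearly on $D$ and the two statements coincide.
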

\begin{proof}
We proceed by induction on $r$ (the number of the $G$-invariant prime Cartier divisors $E_1,\ldots, E_r$, cf.\ Section~\ref{s:setting}). If $r=0$ then 
$X=G/P$ is $G$-homogeneous, and $\Delta_\nu(X)$ is rational polyhedral by \cite[Corollary~4.11]{SS17}.

The inductive step follows from the fact that $Y=Y_1$ admits the partial flag of subvarieties $Y=Y_1\supset Y_2 \supset\cdots\supset Y_r$, from 
Lemma~\ref{L:polyhedralinduct}, and the fact that $\Delta_\nu(X)$ is the image of $\widetilde \Delta_\nu(X)$ under the projection $\R^{n} \oplus N^1(X)_\R \oplus \mathfrak{t}^*\to \R^{n} \oplus N^1(X)_\R$.
\end{proof}



\end{document}